\documentclass{amsart}


\RequirePackage[left,mathlines]{lineno}%
\RequirePackage{amsthm}%
\RequirePackage{amsmath}%
\RequirePackage{amsxtra}%
\RequirePackage{amstext}%
\RequirePackage{amssymb}%
\RequirePackage{latexsym}%
\RequirePackage{dsfont}%
\RequirePackage{ifthen}%

\ifx\pdfoutput\undefined%
   \IfFileExists{graphicx.sty}{\RequirePackage[dvips]{graphicx}
   \DeclareGraphicsExtensions{.eps,.ps}}{}
\else%
   \ifnum\pdfoutput=0
      \IfFileExists{graphicx.sty}{\RequirePackage[dvips]{graphicx}
      \DeclareGraphicsExtensions{.eps,.ps}}{}
   \else
      \IfFileExists{graphicx.sty}{\RequirePackage[pdftex]{graphicx}
      \DeclareGraphicsExtensions{.pdf,.png,.jpg}}{}
   \fi
\fi 



\newtheorem{theorem}{Theorem}[section]
\newtheorem{lemma}[theorem]{Lemma}

\theoremstyle{definition}

\theoremstyle{remark}

\theoremstyle{algorithm}
\newtheorem{algorithm}[theorem]{Algorithm}


\numberwithin{equation}{section}


\RequirePackage[colorlinks,citecolor=blue,urlcolor=blue]{hyperref}
\RequirePackage{color}%

\definecolor{red}       {rgb}{0.0,0.0,1.0}        
\definecolor{magenta}   {rgb}{0.0,0.0,1.0}        
\definecolor{cyan}      {rgb}{0.0,0.0,1.0}        
\definecolor{green}     {rgb}{0.0,0.4,0.3}        


\graphicspath{{./figs/}}%
%


\begin{document}



\title[Spectral projected gradient method]%
{A nonmonotone spectral projected gradient method for large-scale
topology optimization problems}


\author{R. Tavakoli}
\thanks{R. Tavakoli (corresponding author): Department of Material Science and Engineering, Sharif University of
Technology, Tehran, Iran, P.O. Box 11365-9466, email:
\href{mailto:tav@mehr.sharif.edu}{tav@mehr.sharif.edu}, URL:
\href{http://sites.google.com/site/rohtav/}{http://sites.google.com/site/rohtav/}.
}%

\author{H. Zhang}
\thanks{H. Zhang: Department of Mathematics, Louisiana State University, Baton Rouge,
LA, 70808, USA, email:
\href{mailto:hozhang@math.lsu.edu}{hozhang@math.lsu.edu}, URL:
\href{http://www.math.lsu.edu/~hozhang/}{http://www.math.lsu.edu/$\sim$hozhang/}.
}%


\date{\today}


\maketitle%



\begin{abstract}%

An efficient gradient-based method to solve the volume constrained
topology optimization problems is presented. Each iterate of this
algorithm is obtained by the projection of a Barzilai-Borwein step
onto the feasible set consisting of box and one linear constraints
(volume constraint). To ensure the global convergence, an adaptive
nonmonotone line search is performed along the direction that is
given by the current and projection point. The adaptive cyclic
reuse of the Barzilai-Borwein step is applied as the initial
stepsize. The minimum memory requirement, the guaranteed
convergence property, and almost only one function and gradient
evaluations per iteration make this new method very attractive
within common alternative methods to solve large-scale optimal
design problems. Efficiency and feasibility of the presented
method are supported by numerical experiments. \\


\noindent{\bf Keywords and phrases. }%
Barzilai-Borwein step-size, distributed parameter identification,
large-scale topology optimization, method of moving asymptotic
(MMA), nonmonotone line search, volume constraint.


\end{abstract}%


\section{Introduction}%
\label{sec:int}%

The goal of topology optimization is to find optimal material
distribution in the given design domain subject to some
constraints governed by certain physical properties and/or some
other practical constraints during the design. In the past two
decades, advances in the theory of homogenization, optimization,
numerical analysis as well as newly developed engineering
approaches make topology optimization techniques to become a
standard tool of engineering design, in particular in the field of
structural mechanics. For more detailed literature review, one may
refer \cite{allaire2002soh,bendsoe2003tot} and the references
therein. In topology optimization, the design parameters are often
material properties (e.g., conductivity or stiffness tensor) and
the objective is often to minimize an integral functional defined
on the spatial domain with state variables satisfying a partial
differential equation (PDE) corresponding to certain physical law.
In addition, some bound constraints on the design variables and
usually a global material resource constraint are also often
enforced during the design.

One typical large class of topology optimization problems have the
structure that a nonlinear non-convex objective functional is
minimized over a feasible region defined by a second order
elliptic PDE together with bilateral bound and a single equality
constraints. In this paper, we focus on solving this class of
optimal design problems which have
 a wide range of applications in engineering design, such as compliance
mechanics, fluid dynamics, heat transfer, functionally graded and composite
materials \cite{allaire2002soh,bendsoe2003tot}.

In the nonlinear optimization literature, there are many
well-known optimization methods for finding solutions of general
optimization problem. Among them, the sequential quadratic
programming (SQP) \cite{gill2002ssa} method is widely used and
generally considered to be an efficient method for smooth
nonlinear optimization problems with constraints. Trust region
methods \cite{conn2000trm} are another class of well-studied
methods which have strong global and local convergence properties.
More recently, interior-point methods receives much more attention
because of their polynomial complexity. However, most of the above
mentioned methods require the evaluation or a certain type of
approximations of the hessian at each iteration. In addition,
these methods also often require to solve a linear system of
equations (usually indefinite, dense and is difficult to solve by
iterative methods) to a certain level at each iteration.
Therefore, when the problem size is very large, which often occurs
after the discretization of the topology optimization problem,
obtaining the Hessian information as well as solving very large
linear system of equations at each iteration could be very
expensive. Hence, although those second order methods enjoy fast
local convergence properties, they are generally not efficient to
solve very large-scale problems, especially when a solution with
very high accuracy is not strictly required.

On the other hand, some classes of optimization algorithms are
developed within engineering community to solve large-scale
engineering design problems. The first method of this kind is
known as CONLIN or convex linearization \cite{fleury1989ced}. More
advanced version of CONLIN, called method of moving
asymptotes (MMA), was introduced by Svanberg
\cite{svanberg1987mma} . Within each iteration of this method the
optimization problem is approximated by a convex separable
sub-problem, for which efficient solvers are available. The
globalization of the method is performed by either line-search
\cite{zillober1993gcv} procedures or conservative sub-iterations
\cite{svanberg2002cgc}.

Despite the improvements of these methods for general structural
optimization problems, a key issue to design an efficient
optimization method is to exploit the specific structure of the
desired problem. The optimality criteria (OC) method
\cite{bendsoe2003tot} is the first method developed
particularly to solve the resource constrained topology
optimization problems. Recently, OC becomes very popular and is
the most widely used method in the engineering community for such
problems. However, OC is not globally convergent and its
application is limited to some special problems like
thermal/structural compliance minimization
\cite[see:][ch.5]{allaire2002soh}. Moreover, the convergence rate
of OC is not very promising.

By applying the recent techniques developed in the field of
nonlinear optimization, in this paper we would like to design an
optimization algorithm for solving  very large-scale topology
optimization problems. Each iterate of this algorithm is obtained
by performing an adaptive nonmonotone line search along the line
segment connected by the current point and the projection point of
a cyclic Barzilai-Borwein (CBB) step onto the feasible set. Hence,
the method can be called the projected cyclic Barzilai-Borwein
(PCBB) method. Because of the special structure of the feasible
set of the problem, which consists of box constants and a single
linear constraint, it is possible to do the projection on the
feasible set very efficiently with linear time complexity. The
main attractive features of the presented algorithm are: producing
strictly feasible iterations; using almost one objective function
and gradient evaluations per iteration; O(n) memory
consumption (6n working memory); easily to be implemented; only
first order (gradient) information being required.


\section{Barzilai-Borwein methods}%
\label{sec:bb}%

Consider to solve the following finite-dimensional unconstrained
optimization problem,
\begin{equation}%
\label{eq:uo}%
    \min f({\bf x}), \quad {\bf x} \in {\mathbb R}^n,
\end{equation}
where $f: \mathbb{R}^n \rightarrow \mathbb{R}$ is continuously
differentiable, and ${\mathbb R}^n$ denotes the Euclidean space
with dimension $n$. Suppose that ${\bf x}_{0}$ is the starting point,
${\bf x}_{k}$ is the current point, and ${\bf g}_{k}$ is the
gradient of $f$ at ${\bf x}_{k}$, i.e., $\textbf{g}_k=\nabla
f(x_k)$. Then gradient methods generate the next iterative point by
\begin{equation}%
\label{eq:iter_uo}%
    \textbf{x}_{x+1} = \textbf{x}_k - \alpha_k \textbf{g}_k, \quad k = 0,1, \ldots, %
\end{equation}
where the stepsize $\alpha_k$ is computed by some line search techniques.
Two classical ways of selecting initial stepsize in the line searches
are given by the so called Steepest Descent (SD) and Minimal Gradient
(MG) methods, which minimize $f(\textbf{x}_k - \alpha \textbf{g}_k)$ and
 $\|\textbf{g}(\textbf{x}_k - \alpha \textbf{g}_k)\|$ along the search
direction $\textbf{g}_k$, respectively:
\begin{equation}%
\label{eq:sd stepsize}%
    \alpha_{k}^{SD} = \arg\min_{\alpha \in {\mathbb R}} f({\bf x}_{k} -
    \alpha {\bf g}_{k}),%
\end{equation}%
and
\begin{equation}%
\label{eq:mg stepsize}%
    \alpha_{k}^{MG} = \arg\min_{\alpha \in {\mathbb R}}
    \|\textbf{g}({\bf x}_{k} - \alpha {\bf g}_{k})\|.%
\end{equation}%
where $\| \cdot \|$ denotes the Euclidean norm of a vector. However,
it is well-known that SD and MG methods can be very slow when the
Hessian of $f$ is singular or nearly singular at the local minimum.
In this case the iterates could approach the minimum very slowly in
a zigzag fashion \cite{forsythe1968ads}.

The basic idea of Barzilai-Borwein (BB) \cite{barzilai1988tps}
method is to use the matrix ${\bf D}(\alpha_k) =
\frac{1}{\alpha_k}{\bf I}$, where ${\bf I}$ denotes the identity
matrix, to approximate of the Hessian $\nabla^2 f({\bf x}_k)$
by imposing a quasi-Newton condition on ${\bf D}(\alpha_k)$:%
\begin{equation}%
\label{eq:bb stepsize}%
    \alpha_{k}^{BB} = \arg\min_{ \alpha \in {\mathbb R}} \ \|
    {\bf D}(\alpha) \ {\bf s}_{k-1} - {\bf y}_{k-1}\|^2,%
\end{equation}%
where ${\bf s}_{k-1} = {\bf x}_k - {\bf x}_{k-1}$, ${\bf y}_{k-1}
= {\bf g}_k - {\bf g}_{k-1}$, and $k \geqslant 2$. By
straightforward calculation, BB stepsize obtained from
($\ref{eq:bb stepsize}$) is
\begin{equation}%
\label{eq:bb stepsize alpha}%
    \alpha_k^{BB} = %
    \frac{ {\bf s}_{k-1}^T {\bf s}_{k-1} } { {\bf s}_{k-1}^T {\bf y}_{k-1}}.
\end{equation}%
By symmetry, another alternative BB stepsize could be computed by:
\begin{equation}%
\label{eq:bb2 stepsize}%
    \alpha_{k}^{BB2} = \arg\min_{ \alpha \in {\mathbb R}} \ \|
    {\bf s}_{k-1} -  {{\bf D}^{-1}(\alpha)}\ {\bf y}_{k-1}\|^2,%
\end{equation}%
which gives
\begin{equation}%
\label{eq:bb2 stepsize alpha}%
    \alpha_k^{BB2} = %
    \frac{ {\bf s}_{k-1}^T {\bf y}_{k-1} } { {\bf y}_{k-1}^T {\bf y}_{k-1}}.
\end{equation}%
In contrast to the SD or MG methods in which the non-trivial and
expensive optimization problem (\ref{eq:sd stepsize}) or
(\ref{eq:mg stepsize}) need to be solved to obtain the initial
stepsize, the BB stepsize is readily available during the
iterations by formula (\ref{eq:bb stepsize alpha}) or (\ref{eq:bb2
stepsize alpha}). In practice, to keep the stability of the
numerical procedure, it is often to project the BB stepsize onto a
safeguard interval, that is to set
\begin{equation}%
\label{eq:bb_sg}%
    \bar{\alpha}_k^{BB} = \min \{ \alpha_{max},
    \max \{\alpha_{min}, \alpha_k^{BB} \}  \}
\end{equation}%
where $\alpha_{min}, \alpha_{max} \in \mathbb{R}$ and $ 0 <
\alpha_{min} << 1 << \alpha_{max} < \infty$.

The following Lemma shows the spectral property of
Barzilai-Borwein methods and it is due to this property that these
methods are usually called spectral gradient methods.

\begin{lemma}%
\label{lem:spectral}%
The Barzilai-Borwein stepsize, $\alpha_k^{BB}$, is the inverse of
the Rayleigh quotient, related to vector $s_{k-1}$, of the
averaged Hessian of the objective function between two consecutive
iterations $k-1$ and $k$.
\end{lemma}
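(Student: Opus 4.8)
The plan is to exhibit an explicit symmetric matrix---the averaged Hessian---whose action on ${\bf s}_{k-1}$ reproduces ${\bf y}_{k-1}$, and then read the claim directly off formula (\ref{eq:bb stepsize alpha}). Assume throughout that $f$ is twice continuously differentiable. First I would apply the fundamental theorem of calculus to the gradient map restricted to the line segment joining ${\bf x}_{k-1}$ and ${\bf x}_k$: since $\frac{d}{dt}\,{\bf g}({\bf x}_{k-1} + t\,{\bf s}_{k-1}) = \nabla^2 f({\bf x}_{k-1} + t\,{\bf s}_{k-1})\,{\bf s}_{k-1}$, integrating over $t \in [0,1]$ gives
\begin{equation}
\label{eq:avg hessian}
    {\bf y}_{k-1} = {\bf g}_k - {\bf g}_{k-1} = \bar{\bf H}_k\,{\bf s}_{k-1},
    \qquad
    \bar{\bf H}_k := \int_0^1 \nabla^2 f({\bf x}_{k-1} + t\,{\bf s}_{k-1})\,dt,
\end{equation}
where $\bar{\bf H}_k$ is precisely the averaged Hessian of $f$ between iterations $k-1$ and $k$. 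As an integral average of symmetric matrices, $\bar{\bf H}_k$ is itself symmetric, so its Rayleigh quotient along any nonzero vector is well defined.

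Next I would substitute the resulting identity ${\bf s}_{k-1}^T {\bf y}_{k-1} = {\bf s}_{k-1}^T \bar{\bf H}_k\,{\bf s}_{k-1}$ into the denominator of (\ref{eq:bb stepsize alpha}), obtaining
\begin{equation}
\label{eq:bb rayleigh}
    \alpha_k^{BB}
    = \frac{{\bf s}_{k-1}^T {\bf s}_{k-1}}{{\bf s}_{k-1}^T \bar{\bf H}_k\,{\bf s}_{k-1}}
    = \left( \frac{{\bf s}_{k-1}^T \bar{\bf H}_k\,{\bf s}_{k-1}}{{\bf s}_{k-1}^T {\bf s}_{k-1}} \right)^{-1}.
\end{equation}
The parenthesized quantity on the right is by definition the Rayleigh quotient of $\bar{\bf H}_k$ relative to ${\bf s}_{k-1}$, so $\alpha_k^{BB}$ is its reciprocal, which is exactly the assertion of the lemma.

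The argument collapses to a one-line computation once the representation (\ref{eq:avg hessian}) is established, so there is no serious obstacle. The only points requiring care are matters of regularity and nondegeneracy: the integral formula (\ref{eq:avg hessian}) needs $f \in C^2$ (the preceding text assumes only $C^1$), and one should verify that ${\bf s}_{k-1}^T {\bf y}_{k-1} \neq 0$ so that $\alpha_k^{BB}$ is finite and the inversion in (\ref{eq:bb rayleigh}) is legitimate; this holds, for instance, whenever $\bar{\bf H}_k$ is positive definite along ${\bf s}_{k-1}$.
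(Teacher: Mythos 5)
Your proof is correct and follows essentially the same route as the paper: both establish the identity ${\bf y}_{k-1} = \bigl(\int_0^1 \nabla^2 f({\bf x}_{k-1} + t\,{\bf s}_{k-1})\,dt\bigr)\,{\bf s}_{k-1}$ via the fundamental theorem of calculus (the paper invokes the Mean-Value Theorem in integral form) and then substitute into formula (\ref{eq:bb stepsize alpha}) to recognize the reciprocal of the Rayleigh quotient. Your write-up is in fact a cleaner version of the paper's argument, and your remarks on the need for $f \in C^2$ and on the nonvanishing of ${\bf s}_{k-1}^T {\bf y}_{k-1}$ address genuine gaps the paper leaves implicit.
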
%
\begin{proof}%
By the Mean-Value Theorem and straightforward computations, one has%
\[
   \frac{\textbf{g}_k - \textbf{g}_{k-1}}{\textbf{x}_k -
   \textbf{x}_{k-1}} = %
   \int_0^1 \nabla^2 f (t \textbf{x}_k + [1-t] \textbf{x}_{k-1}) dt. %
\]
Therefore,
\[
   \frac{\textbf{s}_{k-1}^T\textbf{y}_{k-1}}{\textbf{s}_{k-1}^T\textbf{s}_{k-1}} =
   \frac{%
   \textbf{s}_{k-1}^T%
   \bigg(%
   \int_0^1 \nabla^2 f (\textbf{x}_{k-1} + t \textbf{s}_{k-1} ) dt
   \bigg)%
   \textbf{s}_{k-1}%
   } {\textbf{s}_{k-1}^T\textbf{s}_{k-1}},%
\]%
which complete the proof.
\end{proof}%
By Lemma \ref{lem:spectral}, one has $\Lambda_{min} \leqslant
(\alpha_k^{BB})^{-1} \leqslant \Lambda_{max}$, where
$\Lambda_{max}$ and $\Lambda_{min}$ are the maximum and minimum
eigenvalues of the averaged Hessian matrix $\int_0^1 \nabla^2
f(\textbf{x}_k+t \textbf{s}_k)\ d t$ respectively. Therefore,
$\alpha_k^{BB} \textbf{I}$ can be considered as an approximation
of the inverse of the averaged Hessian of the objective function.
This shows that the Barzilai-Borwein methods could incorporate
certain useful second order information with extremely little
additional expense of computational and memory cost compared with
SD or MG methods. Due to its easy implementation, efficiency and
low storage requirement, BB-type methods have been widely used in
many applications. Exceptional good performances of BB-type
methods have been observed for solving large-scale problems in
particular when only approximate (not very accurate) solutions are
desired, cf. \cite{fletcher2001bbm}.

It has been shown that if the {\it exact} steepest descent step
$(\ref{eq:sd stepsize})$ is reused in a cyclic fashion, the
convergence speed of gradient methods can be greatly accelerated.
However, it is often very expensive or impractical to find the
exact stepsize along the steepest descent searching direction for
large dimensional problems, unless the objective function is a
quadratic function. Hence, for non-quadratic objective functions,
it is unrealistic to apply some cyclic fashion of the steepest
descent method. On the contrary, the BB stepsize (\ref{eq:bb
stepsize alpha}) or (\ref{eq:bb2 stepsize alpha}) can still be
easily calculated. Hence, analogous to the cyclic steepest descent
method, the cyclic BB (CBB) has been introduced in
\cite{dai2006cbb} for general nonlinear optimization. The
numerical results given in \cite{dai2006cbb} show the CBB methods
have excellent numerical performances compared with SD methods and
even be competitive to some well-known nonlinear conjugate
gradient methods.
 Given an integer $m \geqslant 1$, which is the cycle
length, CBB stepsize can be expressed as
\begin{equation}%
\label{eq:cbb_m}%
    \alpha_{ml+i}^{CBB} = \alpha_{ml+1}^{BB}  \quad \mbox{ for } i = 1,
    \ldots, m, \quad l = 0, 1, \ldots,
\end{equation}%
The R-linear convergence of CBB method for a strongly convex quadratic
objective function has been proved in \cite{dai2003asg}, while the local
R-linear convergence for the CBB method at a local minimizer for general
nonlinear objective function has been established in \cite{dai2006cbb}.

\section{Projected Barzilai-Borwein methods}%
\label{sec:pbb}%

Consider the following constrained counterpart of problem (\ref{eq:uo})%
\begin{equation}%
\label{eq:co}%
    \min f({\bf x}), \quad {\bf x} \in \mathcal{D},
\end{equation}
where $\mathcal{D} \subseteq \mathbb{R}^n$ is a closed non-empty
convex set. Because of the constraints in (\ref{eq:co}), the
iterations generated by (\ref{eq:iter_uo}) may lie outside of the
feasible set $\mathcal{D}$. Therefore, (\ref{eq:iter_uo}) need to
be modified in order to maintain the feasibility of iterations.
Gradient projection methods (see: \cite{rosen1960gpm}) keep the
feasibility of iterates by frequently projecting trial steps
generated from (\ref{eq:iter_uo}) onto the feasible set.
Considering $\textbf{x}$ as the trial step, the projection point
$\textbf{y}$ of $\textbf{x}$ onto $\mathcal{D}$, denoted by
$\mathcal{P}_\mathcal{D}[\textbf{x}]$, can be computed by solving
the following minimization problem
\begin{equation}%
\label{eq:proj}%
    \textbf{y} := \mathcal{P}_\mathcal{D}[\textbf{x}] =
                  \arg \min_{\textbf{z} \in \mathcal{D}} \
                  \frac 12 {\| \textbf{x} - \textbf{z} \|}_2^2.%
\end{equation}
Since the feasible set is convex, problem (\ref{eq:proj}) always
has an unique solution. However, in general (\ref{eq:proj}) is
still a convex constrained quadratic programming problem, which
could be as difficult as the original problem. And for general
convex constrained large-scale problems, this projection step at
each iteration could be very time consuming and is normally the
most expensive part of gradient projection methods. Hence, there
is little interests on applying the gradient projection methods
for large-scale problem unless the gradient projection step can be
performed very efficiently. However, in some special cases when
efficient algorithms for calculating the projection
(\ref{eq:proj}) exist, for example there are only box or single
ball constraints, these gradient projection methods will be
attractive. Fortunately, as we will see in the next section, the
projection step can be performed very efficiently for the volume
constrained topology optimization problems. This makes it possible
for us to design gradient projection type algorithms for volume
constrained topology optimization.

Combining the Barzilai-Borwein stepsize rule and the gradient
projection method, the projected Barzilai-Borwein (PBB) method was
first introduced in \cite{birgin2000nsp}. In this method the
iteration updating formula (\ref{eq:iter_uo}) is modified to
\begin{equation}%
\label{eq:iter_co}%
    \textbf{x}_{x+1} = \textbf{x}_k + \beta_k \textbf{d}_k^\alpha, \quad k = 0,1, \ldots, %
\end{equation}
where $\beta \in \mathbb{R}_+$ and the search direction
$\textbf{d}_k^\alpha$ (descent direction) is computed by
connecting the current point to the projection point of the trial
iterate (\ref{eq:iter_uo}) based on the BB stepsize, that is
\begin{equation}%
\label{eq:pbb_descent}%
    \textbf{d}_k^\alpha =
    \mathcal{P}_\mathcal{D}[ \textbf{x}_k - \alpha_k^{BB} \textbf{g}_k ] - \textbf{x}_k. %
\end{equation}
In \cite{birgin2000nsp}, $\textbf{d}_k^\alpha$ was called
spectral projected gradient. It is not difficult to show that
$\textbf{d}_k^\alpha$ is a descent direction (see Lemma
\ref{lem:descent}). This together with the convexity of the
feasible set $\mathcal{D}$ would imply that for a sufficiently
small $\beta_k$, an iterate of (\ref{eq:iter_co}) will reduce the
objective function value while simultaneously preserve the
feasibility of the iterates.
\begin{lemma}%
\label{lem:descent}%
for all $\textbf{x}_k \in \mathcal{D}$ and $\alpha_k^{BB} > 0$,
\begin{enumerate}%
\item [(i)] $\langle \textbf{g}_k(\textbf{x}), \textbf{d}_k^\alpha(\textbf{x})\rangle
       \leqslant \frac {1}{\alpha_k^{BB}} \ {\| \textbf{d}_k^\alpha(\textbf{x}) \|}^2$.
\item [(ii)] $\textbf{d}_k^\alpha(\textbf{x}) = 0$ if and only if
$\textbf{x}$ is a stationary point for (\ref{eq:co}).
\end{enumerate}
\end{lemma}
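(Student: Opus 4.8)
The plan is to reduce both parts to a single tool: the variational characterization of the Euclidean projection onto a closed convex set. Recall that for any $\textbf{w} \in \mathbb{R}^n$, the projection $\textbf{y} = \mathcal{P}_\mathcal{D}[\textbf{w}]$ is the unique point of $\mathcal{D}$ satisfying
\[
   \langle \textbf{w} - \textbf{y}, \textbf{z} - \textbf{y} \rangle \leqslant 0 \qquad \text{for all } \textbf{z} \in \mathcal{D}.
\]
I would state this first, since it is a standard consequence of the first-order optimality conditions for the strictly convex program (\ref{eq:proj}), and then derive everything from it together with the positivity of $\alpha_k^{BB}$.

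For part (i), I would set $\textbf{w} = \textbf{x}_k - \alpha_k^{BB}\textbf{g}_k$, so that $\textbf{y} = \mathcal{P}_\mathcal{D}[\textbf{w}]$ and $\textbf{d}_k^\alpha = \textbf{y} - \textbf{x}_k$. Because $\textbf{x}_k \in \mathcal{D}$ by hypothesis, I would insert the admissible choice $\textbf{z} = \textbf{x}_k$ into the variational inequality. Substituting $\textbf{w} - \textbf{y} = -\textbf{d}_k^\alpha - \alpha_k^{BB}\textbf{g}_k$ and $\textbf{z} - \textbf{y} = -\textbf{d}_k^\alpha$ collapses the inequality to $\|\textbf{d}_k^\alpha\|^2 + \alpha_k^{BB}\langle \textbf{g}_k, \textbf{d}_k^\alpha \rangle \leqslant 0$. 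Dividing through by $\alpha_k^{BB} > 0$ then gives the claimed bound; in fact this same computation yields the sharper descent inequality $\langle \textbf{g}_k, \textbf{d}_k^\alpha \rangle \leqslant -\frac{1}{\alpha_k^{BB}}\|\textbf{d}_k^\alpha\|^2$, which is precisely what certifies that $\textbf{d}_k^\alpha$ is a genuine descent direction whenever it is nonzero.

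For part (ii), I would combine the same projection inequality with the first-order optimality condition for (\ref{eq:co}), namely that $\textbf{x}$ is stationary exactly when $\langle \textbf{g}(\textbf{x}), \textbf{z} - \textbf{x} \rangle \geqslant 0$ for every $\textbf{z} \in \mathcal{D}$. In the forward direction, $\textbf{d}_k^\alpha = 0$ means $\textbf{x} = \mathcal{P}_\mathcal{D}[\textbf{x} - \alpha_k^{BB}\textbf{g}]$; taking $\textbf{y} = \textbf{x}$ in the variational inequality produces $\langle -\alpha_k^{BB}\textbf{g}, \textbf{z} - \textbf{x} \rangle \leqslant 0$, and dividing by the positive scalar $-\alpha_k^{BB}$ flips the sign to recover the stationarity condition. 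The reverse direction is the same computation read backward: if $\textbf{x}$ is stationary, then $\textbf{x}$ satisfies the defining variational inequality for the projection of $\textbf{x} - \alpha_k^{BB}\textbf{g}$, and by uniqueness of that projection it must equal $\textbf{x}$, whence $\textbf{d}_k^\alpha = 0$.

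I do not foresee a substantive obstacle. The only care needed is bookkeeping of signs when expanding the inner products and matching the orientation of $\textbf{d}_k^\alpha = \textbf{y} - \textbf{x}_k$ against the direction $\textbf{w} - \textbf{y}$ appearing in the projection inequality; once those are aligned, both statements fall out mechanically from the characterization above and the positivity of the stepsize.
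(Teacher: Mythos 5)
Your argument is correct and complete. Note, though, that the paper does not actually prove this lemma: its ``proof'' is only a pointer to Proposition~2.1 of Hager--Zhang, and the variational-inequality argument you give (insert $\textbf{z}=\textbf{x}_k$ into $\langle \textbf{w}-\mathcal{P}_\mathcal{D}[\textbf{w}],\,\textbf{z}-\mathcal{P}_\mathcal{D}[\textbf{w}]\rangle\leqslant 0$, expand, divide by $\alpha_k^{BB}>0$) is precisely the standard argument behind that cited result, so you have in effect supplied the missing proof rather than found a new route. Two small observations. First, your computation yields $\langle \textbf{g}_k,\textbf{d}_k^\alpha\rangle \leqslant -\frac{1}{\alpha_k^{BB}}\|\textbf{d}_k^\alpha\|^2$, which is the inequality one actually needs to certify descent; the paper's statement of (i) has a dropped minus sign, and your sharper bound implies it, so you are proving the right thing. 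Second, in part (ii) you describe $-\alpha_k^{BB}$ as a ``positive scalar'' while simultaneously saying the division ``flips the sign''; the scalar is negative (that is why the sign flips), so only the wording, not the mathematics, needs fixing. With those cosmetic corrections the proof stands as written.
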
%
\begin{proof}%
see the Proposition 2.1 in \cite{hager2007nas}.
\end{proof}%

\section{Globalization by nonmonotone line search}%
\label{sec:ng}%

Using descent directions and simply applying the SD, MG or BB
stepsizes in gradient projection methods is generally not
sufficient to ensure global convergence of the iterates starting
from an arbitrary initial point.  Hence, to deal with general
nonlinear objective function, a globalization strategy is required
to guarantee the global convergence of the algorithm.

Monotonic gradient-based methods usually generate a sequence of
iterates for which a sufficient decrease in the objective function
(or the related merit function) is enforced at every iteration. In
many cases, the globalization strategy accepts the stepsize in the
search direction, if it satisfy the well-known Wolfe or Armijo
type conditions (cf. \cite[][ch. 3]{nocedal2006no}). This can be
accomplished using either of monotonic line searches or trust
region methods.

Since the search direction is parallel to the negative direction
of the gradient (projected gradient) in BB (PBB) methods, the
globalization of BB (PBB) methods by monotonic function value reduction often
reduces them to the classic SD (projected SD) method.
Hence, these monotonic methods will often distroy all of the advantages of BB
(PBB) methods in contrast to SD (projected SD) method. To maintain
the inherit spirits of BB-type methods, it is essential to accept
the initial BB-type stepsize as frequently as possible while
simultaneously ensure the global convergence. Hence, some
nonmonotone line search techniques need to be developed to
globalize the BB-type methods. The first nonmonotone line search
technique was developed in \cite{grippo1986nls} in which the main
goal was to accept the full Newton step as much as possible.
Combing the type of nonmonotone line search in
\cite{grippo1986nls}, the first globalized version of BB nethod
(GBB) was introduced in \cite{raydan1997bab}. Following
\cite{raydan1997bab}, the globalized PBB method (GPBB) was
suggested in \cite{birgin2000nsp}. In these methods the following
(weaker) objective function value decrease condition is enforced
during each iteration
\begin{equation}%
\label{eq:nm_cond}%
    f(\textbf{x}_{k+1}) \leqslant
    \max_{0 \leqslant j \leqslant m_k} f(\textbf{x}_{k-j}) +
    \delta \ \textbf{g}_k^T \textbf{d}_k, %
\end{equation}%
where $\delta \in (0,1)$, $\textbf{d}_k$ is the search direction and
$m_k$ is a nonnegative nondecreasing integer, bounded by some fixed
integer $M$. More precisely
\[
m_0 = 0 \quad \mbox{and } \quad  0 \leqslant m_k \leqslant
\min\{m_{k-1}+1, M \} \quad \mbox{for} \; k >0.%
\]
Based on the same motivations, the more efficient and adaptive
nonmonotone line searches were particularly designed for BB-type
methods in %
\cite{hager2007nas}.  %
%
 The globalized projected
cyclic Barzilai-Borwein (PCBB) algorithm based on these new
nonmonotone line search techniques can be described as follows:%

\begin{algorithm}%
\label{al:gpcbb}%
\begin{enumerate}%

\item []

\item []

\item [] Parameters:

\item [$\bullet$] $\epsilon \in [0, \infty)$, error tolerance.

\item [$\bullet$] $\delta \in (0, 1)$, descent parameter used in Armijo line search.

\item [$\bullet$] $\eta \in (0,1)$, decay factor for stepsize in Armijo line search.

\item [$\bullet$] $\alpha_{min}, \alpha_{max} \in (0,
\infty)$, safeguarding interval for BB stepsize.

\item []

\item [] Initialization:

\item [$\bullet$] $k=0, \textbf{x}_0=$ starting guess, and $f^r_{-1} =
f(\textbf{x}_0)$.

\item []

\item [] Main Loop: While $\| \mathcal{P}_\mathcal{D} [\textbf{x}_k - \textbf{g}_k] - \textbf{x}_k
\|_\infty > \epsilon$

\begin{enumerate}%

\item [1.] Choose $\bar{\alpha}_k \in [\alpha_{min},
\alpha_{max}]$.

\item [2.] Compute $d_k = \mathcal{P}_\mathcal{D} [\textbf{x}_k - \bar{\alpha}_k \textbf{g}_k] -
\textbf{x}_k$.

\item [3.] Choose $f_k^r$ such that $f(\textbf{x}_k) \leqslant f_k^r \leqslant \max\{ f_{k-1}^r,
f_k^{max}\}$ and $f_k^r \leqslant f_k^{max}$ infinitely often, where
$f_k^{max} = \max \{ f(\textbf{x}_{k-i}) : 0 \leqslant i \leqslant
\min (k, M-1)\}$.

\item [4.] Let $f^R$ be either $f_k^r$ or $\min\{ f_k^r,
f_k^{max}\}$.

\item [5.] Nonmonotone line search:

\begin{enumerate}%

\item [5.1.] If $f(\textbf{x}_k + \textbf{d}_k) \leqslant f^R + \delta  \ \textbf{g}_k^T
\textbf{d}_k$ then $\beta_k = 1$.

\item [5.2.] Else $\beta_k = \eta^j$, where $j>0$ is the smallest
integer such that $f(\textbf{x}_k + \eta^j \textbf{d}_k) \leqslant
f^R + \eta^j \delta \ \textbf{g}_k^T \textbf{d}_k$.

\end{enumerate}%

\item [6.] Set $\textbf{x}_{k+1} = \textbf{x}_k + \beta_k
\textbf{d}_k$ and $k = k + 1$.

\end{enumerate}%

\item [] End Main Loop.

\end{enumerate}%

\item []

\end{algorithm}%
The variable $f_k^r$ in Algorithm \ref{al:gpcbb} denotes the so
called ``reference" function value in nonmonotone line search. It
can be seen that the traditional monotone line search simply
corresponds to the choice of setting $f_k^r = f(\textbf{x}_k)$ at
each iteration. And the nonmonotone line search developed in
\cite{grippo1986nls} corresponds to the choice of setting $f_k^r
= f_k^{max}$. In our present study, $f_k^r$ is chosen based on
Algorithm \ref{al:ref_func} adapted from \cite{hager2007nas}. Let
$f_k$ denote $f(\textbf{x}_k)$. In the algorithm
\ref{al:ref_func}, the integer $a$ counts the number of
consecutive iterations for which $\beta_k=1$ in Algorithm
\ref{al:gpcbb} is accepted and the Armijo line search in step 5 is
skipped. The integer $l$ counts the number of iterations since the
function value is strictly decreased by an amount $\Delta>0$.

\begin{algorithm}%
\label{al:ref_func}%
\begin{enumerate}%

\item []

\item []

\item [R0.] If $k=0$, choose parameters: $A>L>0$, $\gamma_1, \gamma_2
>1$, and $\Delta>0$; initialize $a=l=0$ and $f_0^{min}=f_0^{max min} = f_0^r = f_{-1}^r =
f_0$.

\item [R1.] Update $f_k^r$ as follows:

    \begin{enumerate}%

    \item [R1.1.] If $l=L$, then set $l=0$, and

    \begin{equation}%
        f_k^r =\left\{%
        \begin{array}{ll}
        f_k^{max  min} & \texttt{if} \ \frac{f_k^{max} - f_k^{min}}{f_k^{max min} - f_k^{min}} \geqslant
        \gamma_1,\\
       f_k^{max} & \texttt{otherwise.}%
    \end{array}%
    \right.%
    \nonumber
    \end{equation}%

    \item []

    \item [R1.2.] Else If $a>A$, then set

    \begin{equation}%
        f_k^r =\left\{%
        \begin{array}{ll}
        f_k^{max} & \texttt{if} \ f_k^{max}> f_k \ \texttt{and} \ \frac{f_{k-1}^r - f_k}{f_k^{max} - f_k} \geqslant \gamma_2,\\
       f_{k-1}^r & \texttt{otherwise.}%
    \end{array}%
    \right.%
    \nonumber
    \end{equation}%

    \item []

    \item [R1.3.] Else set $f_k^r=f_{k-1}^r$.

    \end{enumerate}%

    \item [R2.] Set $f^R$ as follows in step 4 of Algorithm
    \ref{al:gpcbb}:

    \begin{enumerate}%

        \item [R2.1.] If $j=0$ (the first iterate in a CBB cycle)
        then $f^R = f_k^r$.

        \item [R2.2.] Else ($j>0$) $f^R = \min\{ f_k^r, f_k^{max}
        \}$

    \end{enumerate}%

    \item [R3.] If $\beta_k=1$ in Algorithm \ref{al:gpcbb} then
    $a=a+1$, Else ($\beta_k<1$) $a=0$.

    \item [R4.] If $f_{k+1}\leqslant f_k^{min} - \Delta$ then set
    $f_{k+1}^{max min}= f_{k+1}^{min} = f_{k+1}$ and $l=0$; Else set
    $l=l+1$, $f_{k+1}^{min}=f_k^{min}$ and $f_{k+1}^{max min} = \max\{ f_k^{max min},
    f_{k+1}\}$.

\end{enumerate}%

\end{algorithm}%

The variable $f_k^{max}$ in Algorithm \ref{al:ref_func} stores the
maximum of recent function values and $f_k^{min}$ stores the minimum
function value within the tolerance $\Delta$. The variable
$f_k^{max min}$ stores the maximum function value since the last new
minimum was recorded in $f_k^{min}$.

The condition $f(\textbf{x}_k) < f_k^r$ in step 3 of Algorithm
\ref{al:gpcbb} guarantees that the Armijo line search in step 5
can be satisfied. Notice that the requirement ``$f_k^r<f_k^{max}$
infinitely often" in step 3 which is required to ensure the global
convergence is a weaker condition. Besides Algorithm
\ref{al:ref_func} which satisfies this condition, this condition
can be satisfied by many other strategies. For example, it is
possible to set $f_k^r=f_k^{max}$ at every L iterations. In
Algorithm \ref{al:ref_func}, $f_k^r=f_k^{max}$ if
$f(\textbf{x}_{k-L}) - f(\textbf{x}_k) \leqslant \Delta$ for given
decrease parameter $\Delta>0$ and integer $L>0$.

Now lets give more details about computation of $\bar{\alpha}_k$
in step 1 of Algorithm \ref{al:gpcbb}. This parameter is computed
based on the safeguarded CBB scheme as follows. Let $j$ as an
integer that counts the number of times in which the current BB
step has been reused and let $m$ as the CBB memory in
(\ref{eq:cbb_m}), i.e., the maximum number of times the BB step
will be reused.

\begin{algorithm}%
\label{al:scbb}%
\begin{enumerate}%

\item []

\item []

\item [S0.] If $k=0$ choose $\bar{\alpha}_0 \in [\alpha_{min},
\alpha_{max}]$ and a parameter $\theta <1$ near 1; set $j=0$ and
flag=1. If $k>0$ set flag = 0.

\item [S1.] $0<|d_{ki}|<\bar{\alpha}_k |g_{ki}|$ for some $i$ (component of
vector) then set flag = 1.

\item [S2.] If $\beta_k= 1$ in Algorithm \ref{al:gpcbb} then set
j=j+1; Else ($\beta_k<1$) set flag =1.

\item [S3.] If $j \geqslant m$ or flag=1 or
    $\textbf{s}_k^T \textbf{y}_k / \| \textbf{s}_k \| \| \textbf{y}_k \|\geqslant
    \theta$ then:

    \begin{enumerate}%

    \item [S3.1.] If $\textbf{s}_k^T \textbf{y}_k \leqslant 0$ then

        \begin{enumerate}%

            \item [S3.1.1.] If $j>1.5m$ then set
                               $t = \min\{\|\textbf{x}_k\|_\infty,
                               1\}/\|\textbf{d}^1(\textbf{x}_k)\|_\infty$,\\
                               $\bar{\alpha}_{k+1}=
                                \min \{ \alpha_{max},
                                \max\{ \alpha_{min}, \beta_k\} \}$
                                and $j=0$; \\
                                where $\textbf{d}^1(\textbf{x}_k) =
                                \mathcal{P}_\mathcal{D}
                                [\textbf{x}_k - \textbf{g}_k]
                                - \textbf{x}_k$

            \item [S3.1.2.] Else $\bar{\alpha}_{k+1} = \bar{\alpha}_k$

        \end{enumerate}%

    \item [S3.2] Else set $\bar{\alpha}_{k+1}=
                          \min \{ \alpha_{max},
                          \max\{ \alpha_{min}, \alpha_k^{BB}\} \}$
                          and $j=0$.

    \end{enumerate}%

\end{enumerate}%

\end{algorithm}%

In Algorithm \ref{al:scbb}, the former BB stepsize is reused for
the current iterate unless one of the following conditions happens
in which the new BB stepsize is computed (see S3.2) : (I) the
previous BB stepsize is truncated by the projection step, i.e.,
when the trial point using BB stepsize lies outside of the
feasible domain and the gradient projection was performed (see
S1); (II) the previous BB stepsize is truncated by the line search
step (see S2 where $\beta_k<1$); (III) the number of times the BB
stepsize was reused reaches to its bound, i.e., $j\geqslant m$
(see S3); (IV) $\textbf{s}_k^T \textbf{y}_k / \| \textbf{s}_k \|
\| \textbf{y}_k \|$ is close to 1 (see \cite[][section
4]{dai2006cbb} for details about the justification for this
decision). The condition $\textbf{s}_k^T \textbf{y}_k < 0$ (see
S3.1) is equivalent to detection of the negative curvature in the
searching direction. Assuming that the objective function can be
well approximated by a quadratic function in the vicinity of the
current iterate, a relatively large stepsize should be used in the
next iteration (see S3.1.1) to reduce the function as much as
possible once a negative curvature is detected. This strategy is
similar to the original SPG algorithm (see \cite[][section
2]{birgin2001ass}).

Now lets briefly review the convergence theory of Algorithm
\ref{al:gpcbb}.

\begin{theorem}%
\label{th:gpcbb}%
Let $\mathcal{L}$ be the level set defined by%
\begin{center}%
$\mathcal{L} = \{ {\bf x} \in \mathcal{D} : f ({\bf x}) \le f({\bf
x}_0)$ \}.%
\end{center}%
We assume the following conditions hold:%
\begin{itemize}%
    \item[{\rm G1.}]%
        $f$ is bounded from below in $\mathcal{L}$ and $d_{\max} =
        {\sup}_k \|{\bf d}_k\| < \infty$.%
    \item[{\rm G2.}]%
        If $\bar{\mathcal{L}}$ is the collection of ${\bf x}\in \mathcal{D}$
        whose distance to $\mathcal{L}$ is at most $d_{\max}$, then
        $\nabla f$ is Lipschitz continuous on $\bar{\mathcal{L}}$.%
\end{itemize}%
Then either algorithm \ref{al:gpcbb} with $\epsilon=0$ terminates in
a finite number of iterations at a stationary point, or we have
$\liminf\limits_{k \to\infty} \| \textbf{d}^1(\textbf{x}_k)
\|_\infty = 0$.
\end{theorem}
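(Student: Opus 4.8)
The plan is to follow the convergence template of Hager--Zhang \cite{hager2007nas}, adapted to the safeguarded CBB stepsize and the projection $\mathcal{P}_\mathcal{D}$. The argument splits into four parts: showing the line search is well defined and keeps the iterates in the level set, extracting a uniform lower bound on the accepted stepsize, a nonmonotone summability argument that drives $\|\textbf{d}_k\|$ to zero along a subsequence, and finally translating this into the stated conclusion on $\textbf{d}^1(\textbf{x}_k)$.

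First I would record the structural facts. Since $\mathcal{D}$ is convex and both $\textbf{x}_k$ and $\textbf{x}_k+\textbf{d}_k=\mathcal{P}_\mathcal{D}[\textbf{x}_k-\bar{\alpha}_k\textbf{g}_k]$ lie in $\mathcal{D}$, the whole segment $\textbf{x}_k+\beta\textbf{d}_k$, $\beta\in[0,1]$, is feasible, so the iterates never leave $\mathcal{D}$. By Lemma \ref{lem:descent}(i) together with $\bar{\alpha}_k\le\alpha_{max}$ one has $\textbf{g}_k^T\textbf{d}_k\le -\alpha_{max}^{-1}\|\textbf{d}_k\|^2$, strictly negative unless $\textbf{d}_k=0$; and $\textbf{d}^1(\textbf{x}_k)=0$ is equivalent to stationarity by Lemma \ref{lem:descent}(ii), so termination with $\epsilon=0$ occurs exactly at a stationary point. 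I would then check that the reference value always dominates the current value, $f^R\ge f(\textbf{x}_k)$ (both $f_k^r\ge f(\textbf{x}_k)$ from step~3 and $f_k^{max}\ge f(\textbf{x}_k)$ by construction); with strict descent this guarantees the backtracking in step~5 terminates, so $\beta_k$ is well defined. A short induction over the rules of Algorithm \ref{al:ref_func} then shows $f(\textbf{x}_k)\le f_k^r\le f(\textbf{x}_0)$ for all $k$: the line search gives $f(\textbf{x}_{k+1})\le f^R\le f_k^r$, and each candidate defining $f_{k+1}^r$ (namely $f_{k+1}^{max}$, $f_{k+1}^{max min}$, or $f_k^r$) is a maximum or selection of quantities already bounded by $f(\textbf{x}_0)$. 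Hence $\{\textbf{x}_k\}\subset\mathcal{L}$, every trial point lies within distance $d_{\max}$ of $\mathcal{L}$, i.e.\ in $\bar{\mathcal{L}}$, and G2 supplies an $L$-Lipschitz gradient along the entire search.

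Next I would extract a uniform lower bound on $\beta_k$. When $\beta_k<1$, the trial $\beta_k/\eta$ failed the Armijo test; combining that failed inequality with $f^R\ge f(\textbf{x}_k)$, the descent-lemma estimate $f(\textbf{x}_k+t\textbf{d}_k)-f(\textbf{x}_k)\le t\,\textbf{g}_k^T\textbf{d}_k+\tfrac{L}{2}t^2\|\textbf{d}_k\|^2$ valid on $\bar{\mathcal{L}}$, and $-\textbf{g}_k^T\textbf{d}_k\ge\alpha_{max}^{-1}\|\textbf{d}_k\|^2$, yields $\beta_k\ge\beta_{\min}:=\min\{1,\,2\eta(1-\delta)/(L\alpha_{max})\}>0$. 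Substituting back into the acceptance inequality gives the per-step bound $f(\textbf{x}_{k+1})\le f_k^r-(\delta\beta_{\min}/\alpha_{max})\|\textbf{d}_k\|^2$. The crux is the nonmonotone bookkeeping: $f_k^r$ is bounded below (it dominates $f(\textbf{x}_k)\ge\inf_{\mathcal{L}}f$) but need not be monotone, so one cannot telescope directly. Here the step~3 hypothesis ``$f_k^r\le f_k^{max}$ infinitely often'' is exactly what is needed; following \cite{hager2007nas} one controls the reference sequence across the anchor indices where this holds and sums the decreases $(\delta\beta_{\min}/\alpha_{max})\|\textbf{d}_k\|^2$ against the finite total drop of $f_k^r$, obtaining $\liminf_{k}\|\textbf{d}_k\|=0$. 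I expect this to be the main obstacle and the place where the specific design of Algorithm \ref{al:ref_func} is genuinely used.

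Finally I would convert $\liminf_k\|\textbf{d}_k\|=0$ into the claimed $\liminf_k\|\textbf{d}^1(\textbf{x}_k)\|_\infty=0$. Using the standard monotonicity of projections onto a convex set --- $\alpha\mapsto\|\mathcal{P}_\mathcal{D}[\textbf{x}-\alpha\textbf{g}]-\textbf{x}\|$ is nondecreasing while $\alpha\mapsto\alpha^{-1}\|\mathcal{P}_\mathcal{D}[\textbf{x}-\alpha\textbf{g}]-\textbf{x}\|$ is nonincreasing --- together with the safeguard $\bar{\alpha}_k\in[\alpha_{min},\alpha_{max}]$, one obtains $\|\textbf{d}^1(\textbf{x}_k)\|\le\max\{1,\alpha_{min}^{-1}\}\,\|\textbf{d}_k\|$ by treating the cases $\bar{\alpha}_k\ge1$ and $\bar{\alpha}_k<1$ separately. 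Since the Euclidean and $\infty$-norms are equivalent, the subsequence along which $\|\textbf{d}_k\|\to0$ also drives $\|\textbf{d}^1(\textbf{x}_k)\|_\infty\to0$, which together with the termination discussion in the first part completes the proof.
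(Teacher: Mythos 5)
Your proposal is correct and follows exactly the Hager--Zhang template that the paper itself invokes: the paper's ``proof'' is nothing more than the citation to Theorem~2.2 of \cite{hager2007nas}, and your four-part sketch (feasibility and well-posedness of the backtracking, the uniform lower bound $\beta_k\ge 2\eta(1-\delta)/(L\alpha_{max})$ via the descent lemma on $\bar{\mathcal{L}}$, the nonmonotone summability against the reference values, and the passage from $\textbf{d}_k$ to $\textbf{d}^1(\textbf{x}_k)$ via the monotonicity of $\alpha\mapsto\|\mathcal{P}_\mathcal{D}[\textbf{x}-\alpha\textbf{g}]-\textbf{x}\|$ and of its ratio with $\alpha$) is precisely the skeleton of that cited proof, correctly using the descent inequality $\textbf{g}_k^T\textbf{d}_k\le-\|\textbf{d}_k\|^2/\bar{\alpha}_k$ in its proper (negative) form despite the sign typo in the paper's Lemma~\ref{lem:descent}(i). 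The one step you leave to the reference --- the bookkeeping that sums the per-step decreases against the total drop of $f_k^r$ across the indices where $f_k^r\le f_k^{max}$ --- is indeed the technical core, but since the paper defers the entire proof to the same source, your attempt is, if anything, more complete than the paper's.
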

\begin{proof}%
see the Theorem 2.2 in \cite{hager2007nas}.
\end{proof}%

When $f$ is a strongly convex function, (\ref{eq:co}) has a unique
minimizer ${\bf x}^*$ and the conclusion of the global convergence
Theorem \ref{th:gpcbb} can be strengthened as follows.

\begin{theorem}%
\label{th:gpcbb_convex}
Suppose $f$ is strongly convex and twice continuously differentiable
on $\mathcal{D}$, and there is a positive integer $L$ with the
property that for each $k$, there exists $j \in [k, k+L)$ such that
$f_j^r \le f_j^{\max}$. Then the iterates ${\bf x}_k$ of Algorithm
\ref{al:gpcbb} with $\epsilon=0$ converge to the global minimizer
${\bf x}^*$.
\end{theorem}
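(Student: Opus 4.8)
The plan is to feed the subsequential stationarity produced by Theorem~\ref{th:gpcbb} into the rigid structure that strong convexity imposes, namely a unique minimizer that is the only stationary point together with a quadratic growth bound. Let $\mu>0$ denote the modulus of strong convexity. Then $f$ is coercive on $\mathcal{D}$, so the level set $\mathcal{L}$ is compact and problem~(\ref{eq:co}) has a unique solution $\textbf{x}^*$; by Lemma~\ref{lem:descent}(ii) and the first-order optimality inequality $\nabla f(\textbf{x}^*)^T(\textbf{x}-\textbf{x}^*)\ge 0$ for $\textbf{x}\in\mathcal{D}$, a point is stationary if and only if $\textbf{d}^1=0$ if and only if it equals $\textbf{x}^*$. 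Strong convexity further yields the error bound $\tfrac{\mu}{2}\|\textbf{x}-\textbf{x}^*\|^2\le f(\textbf{x})-f(\textbf{x}^*)$ for all $\textbf{x}\in\mathcal{D}$, which is the device I will use at the very end to turn convergence of function values into convergence of the iterates.

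Next I would dispose of the easy case and extract a good subsequence. If Algorithm~\ref{al:gpcbb} with $\epsilon=0$ halts after finitely many steps, the stopping test forces $\|\textbf{d}^1(\textbf{x}_k)\|_\infty=0$, so the last iterate is stationary and hence equal to $\textbf{x}^*$. Otherwise Theorem~\ref{th:gpcbb} gives $\liminf_{k}\|\textbf{d}^1(\textbf{x}_k)\|_\infty=0$, along which I choose a subsequence with $\textbf{d}^1(\textbf{x}_{k_j})\to 0$. I would first check that the iterates stay in a bounded set: the Armijo test in step~5 gives $f(\textbf{x}_{k+1})\le f^R+\delta\,\textbf{g}_k^T\textbf{d}_k\le f_k^r$, since $f^R\le f_k^r$ and $\textbf{g}_k^T\textbf{d}_k\le 0$ because $\textbf{d}_k$ is a descent direction (Lemma~\ref{lem:descent}), while step~3 keeps $f_k^r$ controlled by the running maxima; together with coercivity this confines all iterates to a compact set. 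Passing to a convergent sub-subsequence $\textbf{x}_{k_{j_i}}\to\bar{\textbf{x}}$ and using continuity of $\textbf{d}^1(\cdot)$ — the projection onto the convex set $\mathcal{D}$ is nonexpansive and $\nabla f$ is continuous — gives $\textbf{d}^1(\bar{\textbf{x}})=0$, hence $\bar{\textbf{x}}=\textbf{x}^*$; in particular $\liminf_k f(\textbf{x}_k)=f^*:=f(\textbf{x}^*)$.

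The main obstacle is upgrading this subsequential convergence to convergence of the entire sequence, and this is precisely where the hypothesis that for each $k$ some $j\in[k,k+L)$ satisfies $f_j^r\le f_j^{\max}$ is needed. I would argue that $\lim_k f(\textbf{x}_k)=f^*$. From $f(\textbf{x}_{k+1})\le f_k^r$ and the reference update $f_k^r\le\max\{f_{k-1}^r,f_k^{\max}\}$, the reference values can only increase through the sliding maximum $f_k^{\max}$; the windowed condition then forces $f_k^r$ to be reset down to a genuine recent function value at least once in every block of $L$ iterations, so the peaks $f_k^{\max}$ cannot remain bounded away from $\liminf_k f(\textbf{x}_k)=f^*$. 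Making this quantitative — tracking $f_k^{\max}$ over windows of lengths $M$ and $L$ and showing $f_k^{\max}\downarrow f^*$, as in \cite{hager2007nas} — is the technical heart of the argument; once $f_k^{\max}\to f^*$, the sandwich $f^*\le f(\textbf{x}_k)\le f_k^{\max}$ gives $f(\textbf{x}_k)\to f^*$. Applying the strong-convexity error bound then yields $\tfrac{\mu}{2}\|\textbf{x}_k-\textbf{x}^*\|^2\le f(\textbf{x}_k)-f^*\to 0$, so $\textbf{x}_k\to\textbf{x}^*$, which completes the proof.
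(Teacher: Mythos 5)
The paper does not actually prove this theorem: its ``proof'' is the single line ``see Corollary 2.3 in \cite{hager2007nas}'', and your outline is essentially a reconstruction of the argument given there. The skeleton is sound and correctly ordered: strong convexity gives coercivity, hence a compact level set (which also validates hypotheses G1--G2 of Theorem \ref{th:gpcbb}, a point worth stating explicitly since that theorem is being invoked); Lemma \ref{lem:descent}(ii) identifies the zeros of $\textbf{d}^1(\cdot)$ with the unique minimizer; $\liminf_k\|\textbf{d}^1(\textbf{x}_k)\|_\infty=0$ plus continuity of $\textbf{d}^1(\cdot)$ and compactness yields a subsequence converging to $\textbf{x}^*$; and the quadratic growth bound converts $f(\textbf{x}_k)\to f^*$ into $\textbf{x}_k\to\textbf{x}^*$ at the end.

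The one place where your write-up is an assertion rather than a deduction is exactly the step you flag, and it is the entire nontrivial content of the corollary: passing from $\liminf_k f(\textbf{x}_k)=f^*$ to $\lim_k f(\textbf{x}_k)=f^*$. Your sentence ``the peaks $f_k^{\max}$ cannot remain bounded away from $f^*$'' does not follow from the windowed hypothesis alone: $f_j^{\max}$ is a maximum over the last $M$ iterates, so even if one iterate per window sits near $f^*$, the window maximum can stay large as long as other recent iterates are large, and the reset $f_j^r\le f_j^{\max}$ only caps the reference value at that possibly large maximum. Closing this requires the quantitative mechanism of \cite{hager2007nas}: show the relevant block maxima form a non-increasing sequence converging to some $\bar f\ge f^*$, then telescope the Armijo inequality $f(\textbf{x}_{k+1})\le f^R+\delta\beta_k\,\textbf{g}_k^T\textbf{d}_k$ together with Lemma \ref{lem:descent}(i) (with the corrected sign, $\textbf{g}_k^T\textbf{d}_k\le -\|\textbf{d}_k\|^2/\bar\alpha_k$, which is also what justifies your claim that $\textbf{g}_k^T\textbf{d}_k\le 0$) over the argmax iterations to force $\textbf{d}_k\to 0$ there, hence $\bar f=f^*$. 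Since you defer precisely this step to the same reference the authors cite for the whole theorem, your proposal is best read as a correct road map with the technical heart left to \cite{hager2007nas}; everything surrounding that step is fine.
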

\begin{proof}%
see the Corollary 2.3 in \cite{hager2007nas}.
\end{proof}%

\begin{figure}[ht]%
\begin{center}%
\includegraphics[width=10.cm]{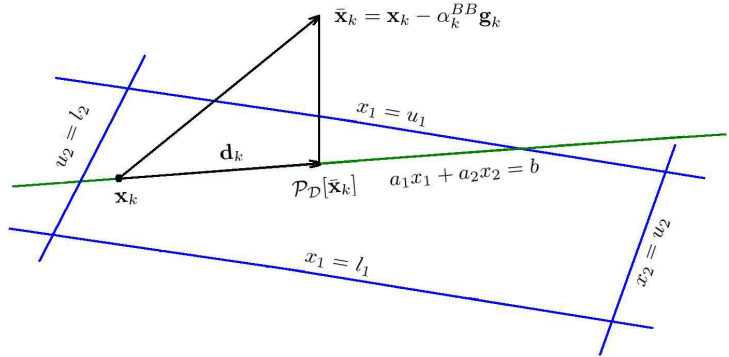}%
\caption{Projection onto the feasible set for $n=2$ related to the
volume constrained topology optimization problem; the feasible set
is a potion of line $\textbf{a}^T \textbf{x} = b$ which is located
inside the box $\textbf{l} \leqslant \textbf{x} \leqslant \textbf{u}$.}%
\label{fig:projection}%
\end{center}%
\end{figure}%

\section{Projection onto the feasible set}%
\label{sec:proj}%

The introduced algorithm in the previous section is general and can
be applied to any type of convex constrained optimization problems
which satisfy the requirements of Theorem \ref{th:gpcbb}. However,
the performance of the method is significantly affected by the efficiency
of the projection step. In this section, we explore the special structure of
the feasible set in the volume constrained topology optimization and
introduce a very efficient algorithm to do the projection step.

After discretization, the feasible set in the volume constrained topology
optimization problems has the following
form (see Figure \ref{fig:projection})%
\begin{equation}%
\label{eq:top_opt_feasible_set}%
   \mathcal{D} = \{ \textbf{x} \in \mathbb{R}^n : \textbf{a}^T \textbf{x} = b,\;  \textbf{l} \leqslant \textbf{x} \leqslant \textbf{u}
    \},
\end{equation}%
where $\textbf{a} \in \mathbb{R}^n$, $a_i \in \mathbb{R}_+$, $b
\in \mathbb{R}_+$, $\textbf{l}, \textbf{u} \in \mathbb{R}^n$, $0<
l_i \leqslant u_i < \infty$. Henceforth, in this section we denote
by $\mathcal{D}$ the feasible domain defined in
(\ref{eq:top_opt_feasible_set}).

Considering (\ref{eq:proj}) together with
(\ref{eq:top_opt_feasible_set}), for a given trial vector
$\textbf{x}$, $\mathcal{P}_\mathcal{D}[\textbf{x}]$ is the unique
minimizer of the following box constrained Lagrangian:%
\begin{equation}%
\label{eq:al_proj}%
\mathcal{L}(\textbf{z}; \lambda)= \frac 12 \|\textbf{z}\|_2^2 -
\textbf{x}^T \textbf{z} + \frac 12 \|\textbf{x}\|_2^2 + \lambda
(\textbf{a}^T \textbf{z} - b), \quad \textbf{z} \in \mathcal{B},
\end{equation}%
where $\lambda \in \mathbb{R}$ is a proper Lagrange multiplier
related to the volume constraint and the box constrained set
$\mathcal{B}$ is defined as $\mathcal{B} = \{\textbf{z}\in \mathbb{R}^n :
\textbf{l} \leqslant \textbf{z} \leqslant \textbf{u} \}$. For any
fixed value of $\lambda$, (\ref{eq:al_proj}) is a convex separable
minimization problem with respect to $\textbf{z}$, which has the
following explicit solution:
\begin{equation}%
\label{eq:al_proj_sol}%
    \textbf{z}(\lambda) = \max \{ \textbf{l},\  \min \{ \textbf{x}- \lambda \textbf{a}, \ \textbf{u}\}
    \},
\end{equation}
where the $\max$ and $\min$ operators are understood as
componentwise. Since solutions resulted from
(\ref{eq:al_proj_sol}) satisfy the bound constraints, to solve the
projection problem (\ref{eq:proj}) the remaining task then turns
out to find the Lagrange multiplier $\lambda^*$ such that
(\ref{eq:al_proj_sol}) satisfies the volume constraint. Hence, an
efficient algorithm is needed to find the (unique) root
$\lambda^*$ of the following nonlinear non-smooth one-dimensional
equation:
\begin{equation}%
\label{eq:dual_al}%
    g(\lambda) = \textbf{a}^T \textbf{z}(\lambda) - b = 0.
\end{equation}
Considering (\ref{eq:al_proj_sol}) together with (\ref{eq:dual_al}),
it can be seen that the graph of $g(\lambda)$ has $2n$ breakpoints at:%
\[
\lambda_i^l = (x_i- l_i)/a_i,  \quad \lambda_i^u = (x_i- u_i)/a_i,
\quad i=1, \ldots, n.
\]
Since $l_i \leqslant u_i$ and $a_i>0$ for all $i$, we have $\lambda_i^u
\leqslant \lambda_i^l$. So, each $z_i(\lambda)$ in
(\ref{eq:al_proj_sol}) can be expressed in the following form:%
\begin{equation}%
\label{eq:al_proj_sol2}%
    z_i(\lambda)=%
    \left\{%
\begin{array}{lll}
      u_i, & \texttt{if} & \lambda \leqslant \lambda_i^u,\\
      x_i - \lambda a_i,
      & \texttt{if} &  \lambda_i^u \leqslant \lambda  \leqslant \lambda_i^l,\\
      l_i,& \texttt{if} & \lambda \geqslant \lambda_i^l.
\end{array}%
\right.%
\end{equation}%
From (\ref{eq:al_proj_sol2}), it is obvious that for each $i$,
$z_i(\lambda)$ is a continuous piecewise linear and non-increasing
function of $\lambda$ (see Figure \ref{fig:x_i}). Therefore, by
$a_i>0$ for all $i$, $g(\lambda)$ is a continuous piecewise linear
and non-increasing function of $\lambda$. Then, with some
straitforward calculations, we can see that the root $\lambda^*$
of $g(\lambda)$ is unique and $\lambda^* \in [\lambda_{min},
\lambda_{max}]$ with $\lambda_{min} \leqslant 0 \leqslant
\lambda_{max}$, where $\lambda_{min} = \min\{\lambda_i^u\}$ and
$\lambda_{max} = \max\{\lambda_i^l\}$. So, starting from interval
$[\lambda_{min}, \lambda_{max}]$ and using some classical one
dimensional root finding method, it is possible to find
$\lambda^*$ up to the machine precision with a priori known
computational complexity. However, it is possible to use more
advanced root founding methods to improve the performance of this
step.

In our approach, the Brent's root finding method
\cite{brent1971agc} is employed to solve (\ref{eq:dual_al}). The
Brent's method does not assume the function differentiability and
is enable to manage the limited precision of the computed
arithmetic very well. In the worst condition, the convergence of
this method is never slower that of the bisection method. The
Brent's method has been proved to be very efficient and robust in
practice and it is currently accepted as a standard method for one
dimensional root finding problem (cf. \cite[][chapter
9]{press2007nre}). The specific implementation details of Brent's
method are available in the {\it{Numerical Recipe}} (see
\cite[][chapter 9]{press2007nre})..
\begin{figure}[ht]%
\begin{center}%
\includegraphics[width=7.cm]{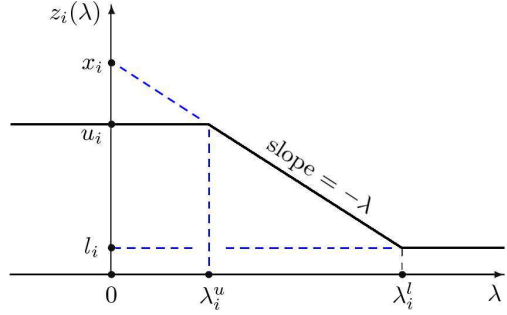}%
\caption{Plot of $z_i(\lambda)$ as a function of $\lambda$, cf. equation \ref{eq:al_proj_sol2}.}%
\label{fig:x_i}%
\end{center}%
\end{figure}%
%


\section{Numerical solution of topology optimization problems}%
\label{sec:topopt}%

To show efficiency of the proposed method for the volume
constrained topology optimization \cite{bendsoe2003topology}, in
this section we solve the standard model problems presented in
\cite{donoso2006numerical}. The description of the problem is as
follows: considering two isotropic conducting materials, with
thermal conductivities $k_\alpha$ and $k_\beta$, $0 < k_\alpha <
k_\beta$, in a simply connected design domain $\Omega \subset
\mathbb{R}^d \; (d = 2, 3)$, the goal is to mix these materials with
a fixed ratio in $\Omega$, such that the total temperature
gradient in $\Omega$ is minimized under the thermal load $f \in
L^2(\Omega)$. More specifically, the problem can be formulated as
\begin{equation}%
  \nonumber
    (P) \quad %
    \left\{%
\begin{array}{rll}
      \arg \min_{w}  J(w) =  &\frac 12 \int_{\Omega} \nabla \theta \cdot \nabla \theta \ d\textbf{x},
      &\\\\
      \texttt{subject to:} \\\\
      - \nabla \cdot (k(w) \nabla \theta) = & f(\mathbf{x})   &
    \texttt{in} \quad \Omega, \\
    \theta(\mathbf{x})  = & \theta_0(\mathbf{x})   &
    \texttt{on} \quad \partial\Omega,  \\ %
    k(w)    = & w^p k_\beta  + (1-w^p) k_\alpha             &
    \texttt{in} \quad \Omega,  \\ %
    \int_{\Omega} w \ d \textbf{x}   = & R |\Omega|, \quad 0<R<1, & 0  \leqslant w \leqslant
    1,
\end{array}%
\right.%
\end{equation}%
\\
where $\theta \in H^1(\Omega)$ is the state variable, $p\geqslant
1$ is a penalization factor and $w \in L^2(\Omega)$ is the control
parameter (topology indicator field). By some standard
derivations, the first order optimality condition of problem P can
be written as the following:

\begin{equation}%
  \nonumber
    (OC) \quad %
    \left\{%
\begin{array}{rll}
      - \nabla \cdot (k(w) \nabla \theta) = & f(\mathbf{x})   &
    \texttt{in} \quad \Omega, \\
    \theta(\mathbf{x})  = & \theta_0(\mathbf{x})   &
    \texttt{on} \quad \partial\Omega,  \\ %
    - \nabla \cdot (k(w) \nabla \eta) = & -\nabla\cdot(\nabla \theta)  &
    \texttt{in} \quad \Omega, \\
   \eta(\mathbf{x})  = & 0   &
  \texttt{on} \quad \partial\Omega, \\  %
    k(w)    = & w^p k_\beta  + (1-w^p) k_\alpha             &
    \texttt{in} \quad \Omega, \\  %
    \mathcal{P}_{\mathcal{D}}\big(G(\mathbf{x}) \big)    = & 0 &
    \texttt{in} \quad \Omega,  \\%
    G(\mathbf{x})    = & -p w^{p-1}(k_\beta - k_\alpha) \nabla \theta \cdot \nabla \eta &
    \texttt{in} \quad \Omega,  \\%
\end{array}%
\right.%
\end{equation}%
\\
where $\eta \in H^1_0(\Omega)$ is the adjoint state, $G$ is the
$L^2$ gradient of the objective functional with respect to $w$ and
$\mathcal{P}_{\mathcal{D}}(u)$ denotes the $L^2$ projection of
function $u$ onto the admissible set $\mathcal{D}$,
\[
\mathcal{D} = \{ w \in L^2(\Omega) \ | \  \int_\Omega
w(\mathbf{x}) d \mathbf{x}  = R |\Omega|, \quad 0<R<1, \quad 0
\leqslant w \leqslant 1 \}.
\]

By discretization of $\Omega$ into an $n$ control volumes, we have
the finite dimensional counterpart of problem (P). We also assume
that the state variable and the design parameter are defined at
the center of each control volume. Under these assumptions, the
admissible design domain $\mathcal{D}$ forms a simplex in
$\mathbb{R}^n$ which is identical to the continuous knapsack
constraints in (\ref{eq:top_opt_feasible_set}).  At each iterate
of the control parameter $w$, we solve the associated state PDE.
Then the discretized optimization problem would have the general
format of problem \eqref{eq:co}, which has a nonlinear objective
function with convex continuous knapsack constraints.

In our numerical experiments, the problem (P) was solved in two and
three dimensions for $\Omega = [0,1]^2$ and  $\Omega = [0,1]^3$.
The spatial domain is divided into $127^2$ and $31^3$
control volumes in two and three dimensions respectively. In all of
experiments, we set $k_\alpha = 1$, $f(\mathbf{x}) = 1$ and $R =
0.4$. And in these experiments two conductivity ratios 2 and 100
were tested, which is equivalent to setting $k_{\beta} = 2, 100$
respectively. The penalization factor $p$ is taken to be $1$ and
$10$ for conductivity ratio $2$ and $100$ respectively. The
governing PDE are solved by cell centered finite volume method using
central difference scheme and the related system of linear equations
are solved by a preconditioned conjugate gradient method with
relative convergence threshold $10^{-20}$. The optimization is
performed for $15$ iterations in these numerical experiments. Notice
that using finite volume method we do not observe any topological
instability phenomena  (the checkerboard pattern), which often
occurs by using finite element method for topology optimization
problems.

The input parameters related to optimization algorithms used in this
study are as follows: $\delta = 10^{-4}$,
$\eta = 0.5$, $\alpha_{min} = 10^{-30}$, $\alpha_{max} = 10^{30}$,
$A=40$, $L = 10$, $M=20$, $m=4$, $\gamma_1 = \gamma_2= 2$, $\theta = 0.975$.
Moreover, the initial value  $\alpha_0 $ for spectral step-size was
taken to be $ 1/\|\mathcal{P}_\mathcal{D} [\textbf{x}_k - \textbf{g}_k] - \textbf{x}_k
\|_\infty$. Another alternative choice for this parameter could be $\alpha_0
= 1/\| \mathcal{P}_\mathcal{D} [\textbf{x}_k - \textbf{g}_k] -
\textbf{x}_k \|_2$. However, for the testing problems in our numerical experiments the
former choice was considerably more efficient.

To evaluate the efficiency of the presented method, we compare our
results with the results obtained by the method of moving asymptotic
(MMA) \cite{svanberg1987mma}. The implementation of MMA available in
SCPIP code \cite{zillober2002scp}\footnote{The SCPIP code (in
Fortran) is freely available through personal request from its
original author (Christian Zillober:
christian.zillober(at)uni-wuerzburg.de).} is used in our
experiments. All default parameters are used in SCPIP code, except
the parameter for the constraint violation, for which the threshold
$10^{-7}$ is used in this study. That SCPIP code used in MMA
algorithm has two globalization strategies. The first one is
identical to that of the original MMA by Svanberg
\cite{svanberg2002cgc}, while the second one is globalization by
monotone line search method (see: \cite{zillober1993gcv}). The first
strategy is employed in our experiments, since our results with
second strategy was significantly worse in terms of the
computational cost. Note that in our procedure the PDE constraint is
solved upto the accuracy of the finite volume method we have applied
for solving this PDE, and the constraints
\eqref{eq:top_opt_feasible_set} in PCBB algorithm are satisfied upto
the machine precision.

The results of our numerical experiments including the variation of
the objective functional during the optimization process and the
final resulted topology ($w$-field) are shown in figures
\ref{fig:obj_hist_2d}, \ref{fig:obj_hist_3d}, \ref{fig:final_top_2d}
and \ref{fig:final_top_3d}. The plots in figures
\ref{fig:obj_hist_2d}, \ref{fig:obj_hist_3d} show the success of the
presented method to solve these topology optimization problems.
Roughly speaking, both methods behave very competitively in terms of
computational cost and final results. The main differences in the results
are related to the conductivity ratio 100, in which the PCBB
performs superior and its final objective function values are
considerably lower than that of MMA. The sign of the differences is
clear in the figures \ref{fig:final_top_2d} and \ref{fig:final_top_3d}
related to the final topologies. But for the
conductivity ratio equal to 2, it seems MMA behaves slightly better than PCBB.
However, the differences in terms of the objective
function values as well as the final topologies are almost negligible .

In all our numerical experiments, the final total number of function
and gradient evaluations was $15$ which is equal to the
number of optimization cycles. This result shows that both methods
used only one function and gradient evaluations per iteration in practice.
We believe that this is a key property for the success of MMA and makes
the method well accepted in the engineering design community.
In fact, MMA behaves very conservatively and uses small
steps to proceed toward a local minimum. More clearly, it does not
use (expensive) line search globalization strategy (unlike
alternative methods), but uses reasonably small steps such that
hopefully the merit function decreases sufficiently during each
iteration. Of course, whenever the merit function increases (or
sufficient decrease in merit function value violates), which
rarely happens in practice, it performs sub-cycles to ensure the desired monotonic
behavior. On the other hand, the nonmonotone PCBB methods enjoys such property
using an alternative strategy. In practice, by applying the nonmonotonic line
search, PCBB often uses only one function evaluation per optimization cycle.
As our results clearly show, this property makes the nonmonotone PCBB
method as a very competitive alternative to MMA for these class of
problems.

It is important to note that in all of our experiments presented
here, the projection step is used actively in PCBB method.
Therefore, the cyclic reuse of step size never employed in our testing problems
(cf. algorithm 4.3). Moreover, for the conductivity ratio 100, PCBB
method explored directions of negative curvature after a few
iterations, and so used large step-sizes which considerably
accelerate the convergence. This property plays an important role for the
superior results of PCBB compared with MMA in these cases
(cf. table \ref{tab:aplha}).

Besides the presented numerical results here, we have also applied
the presented method successfully to many families of topology
optimization problems with very satisfied results. But we omit the
details of these experiments here due to the limited space for this
paper.

\begin{figure}[ht]%
\begin{center}%
\includegraphics[width=13.cm]{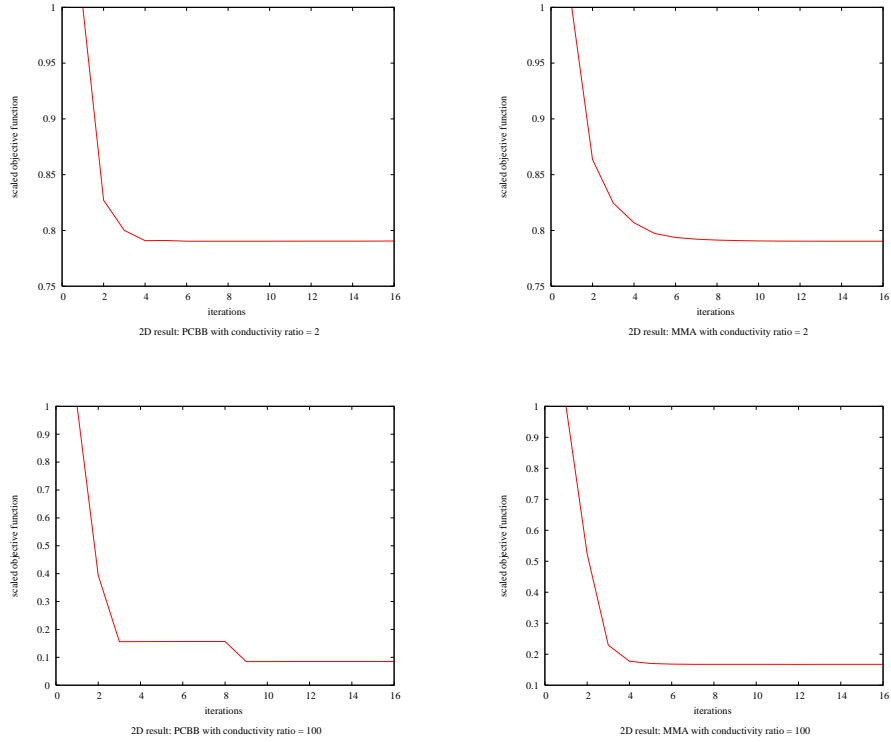}%
\caption{Variations of scaled objective function values during optimization
cycles for 2D examples.
}%
\label{fig:obj_hist_2d}%
\end{center}%
\end{figure}%
\begin{figure}[ht]%
\begin{center}%
\includegraphics[width=13.cm]{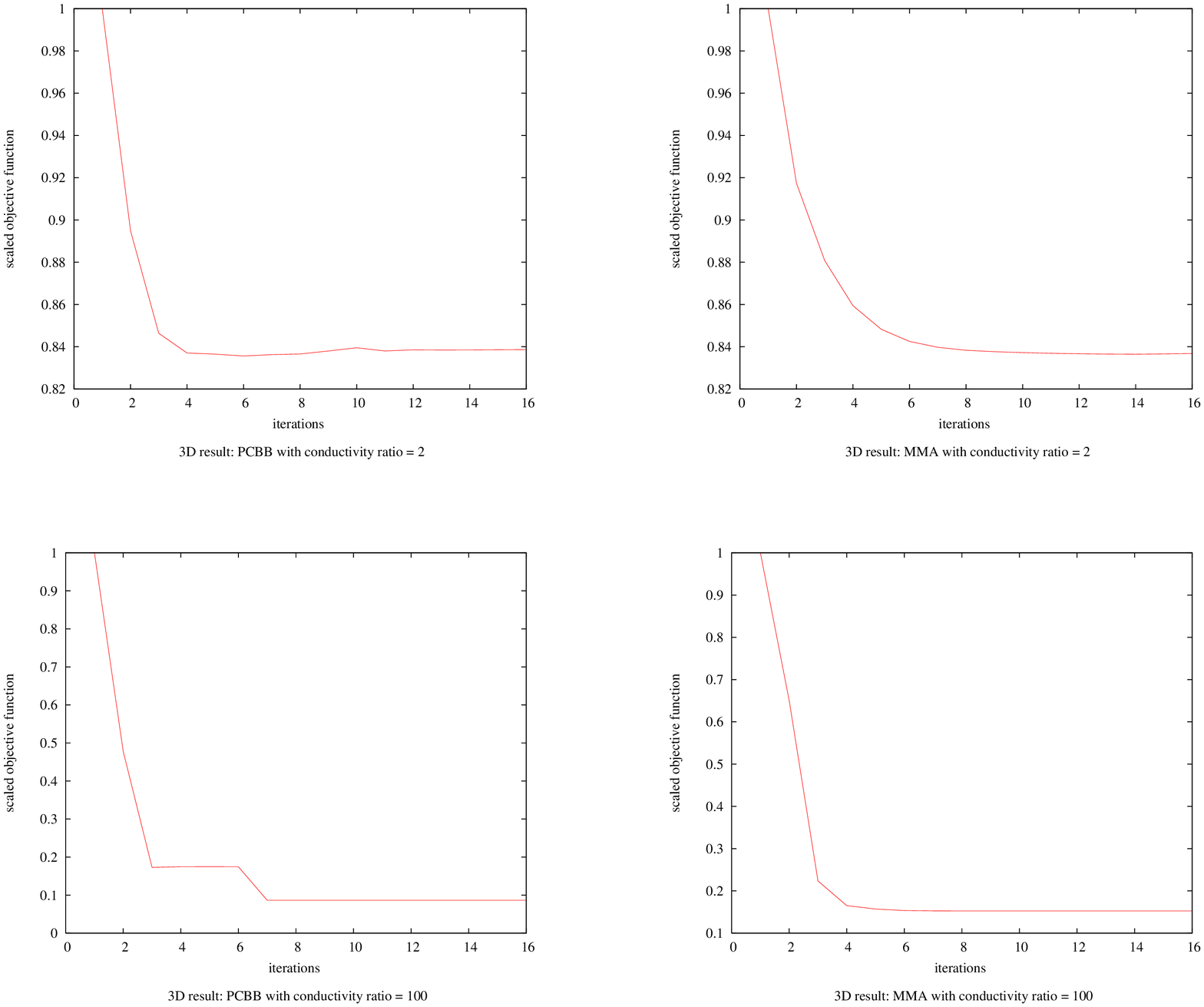}%
\caption{Variations of scaled objective function values during optimization
cycles for 3D examples.
}%
\label{fig:obj_hist_3d}%
\end{center}%
\end{figure}%
\begin{figure}[ht]%
\begin{center}%
\includegraphics[width=14.cm]{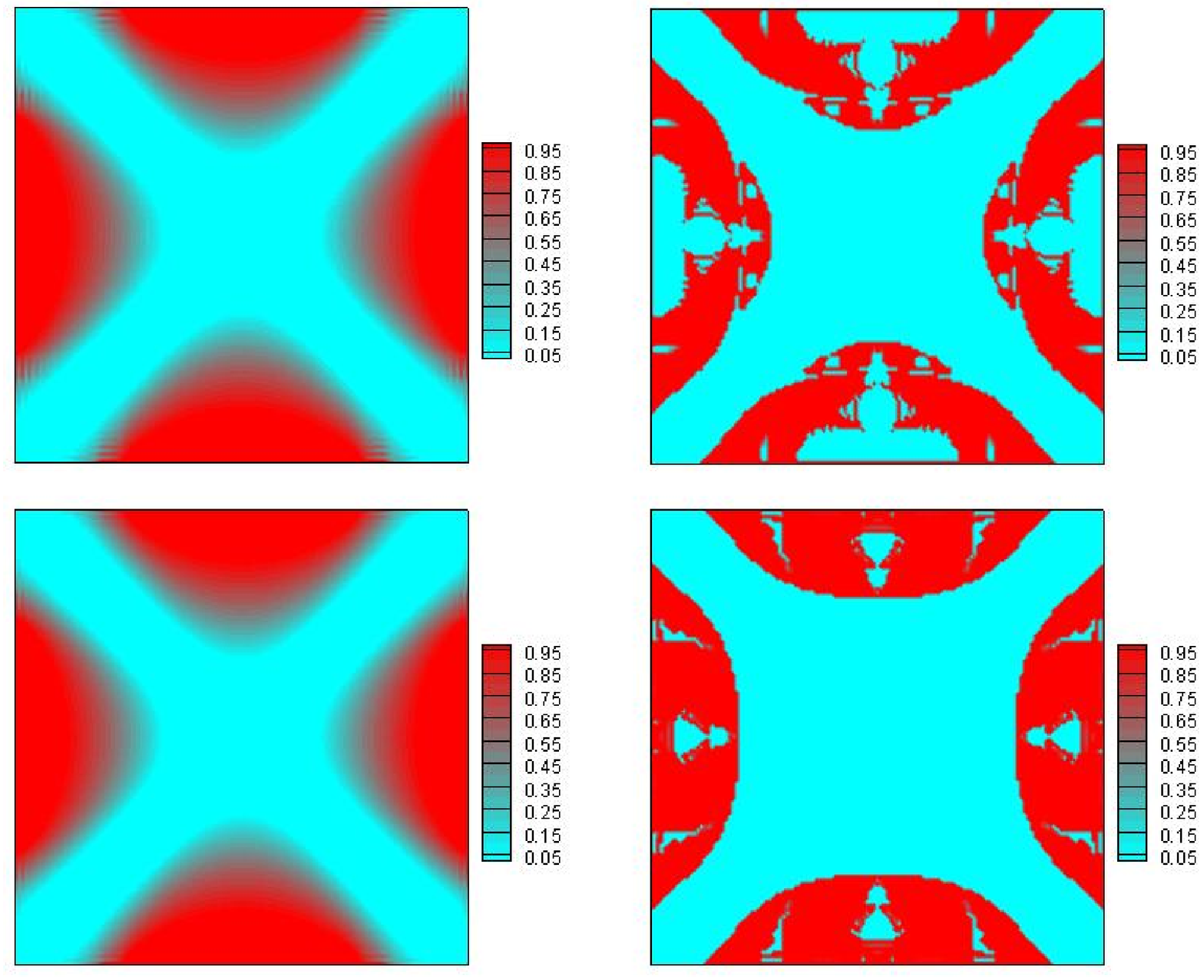}%
\caption{Final distribution of material field, $w$, for 2D examples
related to PCBB (top) and MMA (bottom) for conductivity ratio 2
(left) and 100 (right).
}%
\label{fig:final_top_2d}%
\end{center}%
\end{figure}%
\begin{figure}[ht]%
\begin{center}%
\includegraphics[width=14.cm]{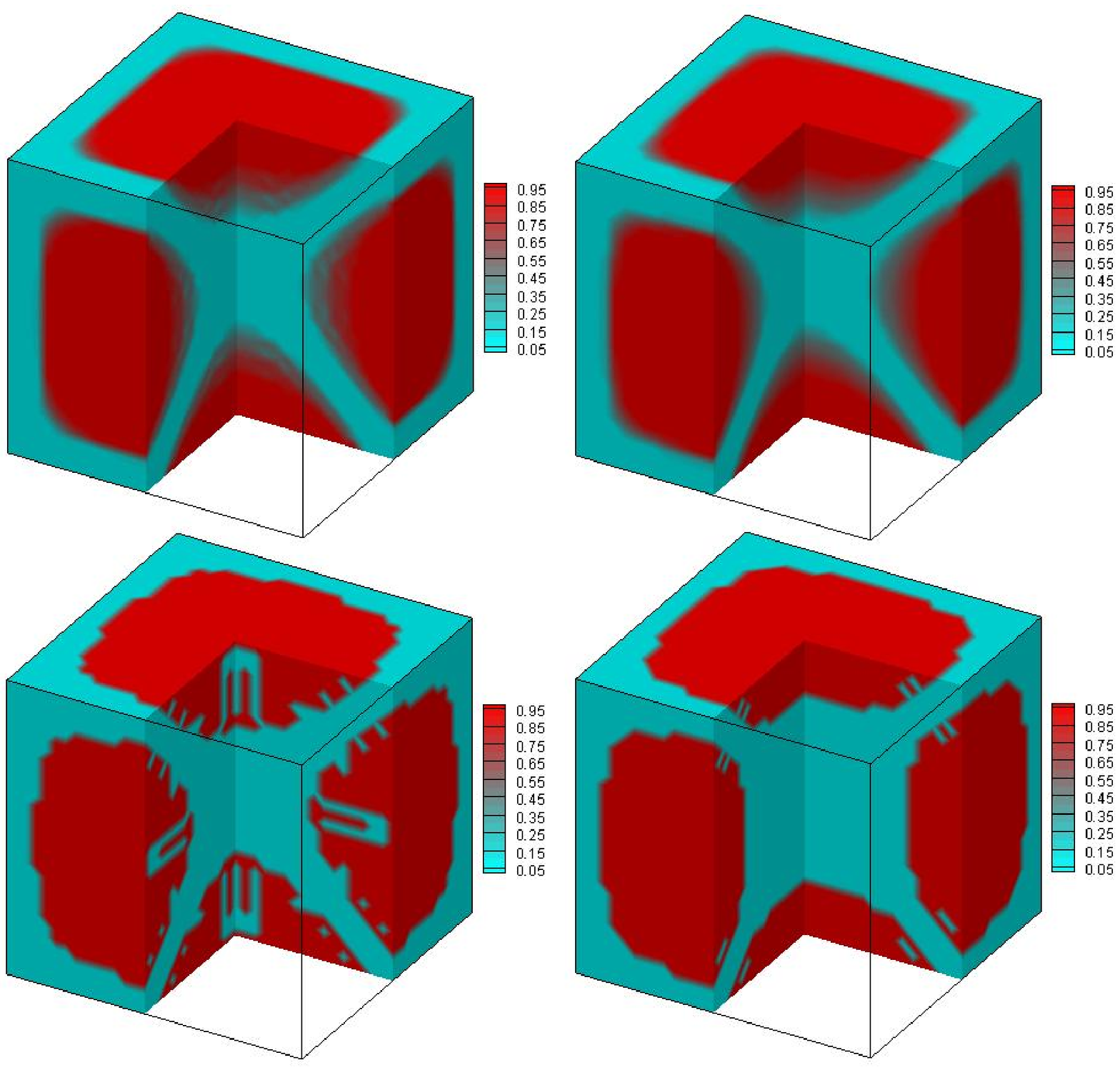}%
\caption{Final distribution of material field, $w$, for 3D examples,
related to PCBB (top) and MMA (bottom) conductivity ratio 2 (left) and 100 (right).
}%
\label{fig:final_top_3d}%
\end{center}%
\end{figure}%
\begin{table}[ht]%
\centering %
\caption{Variations of step size $\alpha_k$ during the fist 10
optimization cycles in PCBB algorithm for 2D examples (the decimals
are dropped in the table). $\alpha(2)$ and $\alpha(100)$ denote
values of $\alpha_k$ for the conductivity ratio 2 and 100
respectively.}
\label{tab:aplha}%
\begin{tabular}{ccccccccccc}%
\\\hline\hline%
iter & 1 & 2 & 3& 4 & 5 & 6 & 7 & 8 & 9 & 10  \\ \hline %
$\alpha(2)$& 37  & 48 & 124& 133 & 52 & 44 & 54 & 147 & 305 & 286   \\ \hline %
$\alpha(100)$ & 53 & 10$^{30}$ &  10$^{30}$ & 243 & 164 & 100 & 104
&
10$^{30}$ & 10$^{30}$ & 1093  \\ %
\hline\hline%
\end{tabular}%
\end{table}%
%


\clearpage

\section{Closing remarks}

Recent studies on the spectral projected gradient methods show
that these class of methods are very promising for solving the
large-scale convex constrained optimization problems when the
projection on the feasible set can be performed efficiently. In
this paper, we presented a particular spectral projected gradient
method called PCBB (Projected Cyclic Barzilai-Borwein) for solving
volume constrained topology optimization problems. This method
applies the cyclic Barzilai-Borwein stepsizes and uses the most
recent adaptive nonmonotone line search techniques, which greatly
improves the efficiency of the method as well as ensures its
global convergence.  By exploring the structure of the admissible
set of the volume constrained topology optimization problem, the
projection step can be performed very efficiently. In addition to
high efficiency, our presented method also enjoys the following
features: easy implementation, minimum memory requirement, no need
for second order (Hessian) information, not sensitive to data
noise, feasibility being strictly maintained during optimization
process. All these features are essential for a successful
numerical method to solve large-scale topology optimization
problems.

Our numerical results indicate our presented methods are very promising
and well-suited for the class of topology optimization problems
considered in this paper. Comparing our results with those of MMA, a
well accepted method in shape and topology optimization community,
it seems the presented methods could be a very competitive
alternative choice for solving the class of topology design problems.
Finally, our results suggest that including spectral step size and
a nonmonotone globalization strategy in MMA, the MMA algorithm could
be further significantly improved. This would be a topic for our continuing research.

\bibliographystyle{plain} 
\bibliography{pcbb}%

\end{document}